\newcommand{\Var}{{\rm{Var}_{\mathbb{C}}}}
\def\1{\underline{1}}
\def\Z{{\mathbb Z}}
\def\C{{\mathbb C}}
\def\RR{{\mathcal R}}
\DeclareMathOperator{\spec}{Spec}
\def\Ind{{\rm Ind}}
\def\GG{{\mathcal{G}}}
\newtheorem{theorem}{Theorem}
\newtheorem{proposition}{Proposition}
\newtheorem{definition}{Definition}
\newenvironment{corollary}
{\smallskip\noindent{\bf Corollary\/}.}{\smallskip\par}
\newenvironment{remark}
{\smallskip\noindent{\bf Remark\/}.}{\smallskip\par}
\newenvironment{proof}
{\noindent{\bf Proof\/}.}{{ $\square$}\smallskip\par}
\title{Generalized orbifold Euler characteristics on the Grothendieck ring of varieties with actions of finite groups
\footnote{Math. Subject Class. 2010:  18F30, 55M35.
Keywords: 
actions of  finite groups, complex quasi-projective varieties, Grothendieck rings, $\lambda$-structure,
power structure, Macdonald type equations.}
}
\author{S.M.~Gusein-Zade \thanks{The work of the first author
(Sections~\ref{sec:fGr}, \ref{sec:Universal_Euler} and  \ref{sec:Instead_Macdonald}) 
was supported by the grant 16-11-10018 of the Russian Science Foundation.
Address: Moscow State University, Faculty
of Mechanics and Mathematics, GSP-1, Moscow, 119991, Russia. E-mail:
sabir\symbol{'100}mccme.ru} \and I.~Luengo \thanks{The last two authors were partially
supported by a competitive Spanish national grant MTM2016-76868-C2-1-P.
Address:  ICMAT (CSIC-UAM-UC3M-UCM), Dept. of Algebra, Geometry and Topology, Complutense University of Madrid,
Plaza de Ciencias 3, Madrid, 28040, Spain.
E-mail: iluengo\symbol{'100}mat.ucm.es} \and
A.~Melle-Hern\'andez \thanks{Address:  Instituto de Matem\'a¡tica Interdisciplinar (IMI),
Dept. of Algebra, Geometry and Topology, Complutense University of Madrid,
Plaza de Ciencias 3, Madrid, 28040, Spain. E-mail: amelle\symbol{'100}mat.ucm.es}}
\date{}
\begin{document}
\def\eps{\varepsilon}

\maketitle

\begin{abstract}
The notion of the orbifold Euler characteristic came from physics at the end of 80's. There were defined
higher order versions of the  orbifold Euler characteristic and generalized (``motivic'') versions of them.
In a previous paper the authors defined a notion of the Grothendieck ring $K_0^{\rm fGr}(\Var)$ of varieties
with actions of finite groups on which the orbifold Euler characteristic and its higher order versions are
homomorphisms to the ring of integers. Here we define  the generalized orbifold Euler characteristic and 
higher order versions of it as ring  homomorphisms from $K_0^{\rm fGr}(\Var)$ to the Grothendieck ring $K_0(\Var)$
of complex quasi-projective varieties and give some analogues of the classical 
Macdonald equations for the generating series of the Euler characteristics of the symmetric products of a space.
\end{abstract}

\section{Introduction}\label{sec:Intro}
The notion of the orbifold Euler characteristic $\chi^{\rm orb}$  came from physics at the 
end of 80's:,  \cite{DHVW}, see also \cite{AS} and \cite{HH}. Coincidence (up to sign) of the 
orbifold Euler characteristics is a necessary condition for orbifolds 
(or  rather for their crepant resolutions) to be mirror symmetric.
Higher order Euler characteristics $\chi^{(k)}$ of spaces with finite group actions were defined in    
\cite{AS} and \cite{BrF}.
The class of a variety in the Grothendieck ring $K_0(\Var)$ of complex quasi-projective varieties 
(being an additive invariant) can be considered as a generalized (``motivic'') Euler characteristic.
Generalized versions of the orbifold Euler characteristic and of
higher order Euler characteristics were defined in \cite{PSIM2007} (as a refinement of the so called
orbifold Hodge-Deligne polynomial,  see, e.~g., \cite{Wang})  and in \cite{GPh}.
(They take values in the extension of the Grothendieck $K_0(\Var)$ ring of complex quasi-projective varieties
by rational powers of the class of the affine line.)
One has the classical Macdonald equation for the generating series of the Euler characteristics of the 
symmetric products of a topological space.
Its versions for the orbifold Euler characteristic an for the higher order Euler characteristics were obtained in \cite{Tamanoi}.
Some versions for the generalized orbifold Euler characteristic an for the  generalized higher order
Euler characteristics were given in \cite{PSIM2007} and \cite{GPh}.
 
The orbifold Euler characteristic and the higher order Euler characteristics
are usually considered as functions on the Grothendieck ring  $K_0^G(\Var)$
of $G$-varieties. These functions define group homomorphisms from $K_0^G(\Var)$ to $\Z$, but not ring homomorphism.
A Grothendieck ring $K_0^{\rm fGr}(\Var)$ on which these Euler characteristics are ring homomorphisms 
was defined in \cite{PEMS}. It was called the Grothendieck ring  
of complex quasi-projective varieties with actions of finite groups.
In \cite{FAA2018}, there was defined a notion of the universal Euler characteristic on  
$K_0^{\rm fGr}(\Var)$. It takes values in a ring generated, as a free Abelian group, 
by elements corresponding to isomorphism classes of finite groups.

Here we define generalized orbifold Euler characteristic $\chi^{\rm orb}_{\rm g}$ and 
generalized higher order  Euler characteristics $\chi^{(k)}_{\rm g}$ as homomorphisms from 
$K_0^{\rm fGr}(\Var)$ to $K_0(\Var)$. We formulate Macdonald type equations for them in terms of 
$\lambda$-ring homomorphisms.
There is a map $\alpha:K_0^{\rm fGr}(\Var) \to  K_0^{\rm fGr}(\Var)$ such that 
$p\circ \alpha^{k}=\chi^{(k)}_{\rm g}$, where $p$ is the natural map  $K_0^{\rm fGr}(\Var) \to K_0(\Var)$
sending the class $[(X, G)]$ of a $G$-variety to the class $[X/G] $ of  its quotient.
We prove a substitute of a Macdonald type equation for the homomorphism $\alpha.$

\section{Power structures and the Grothendieck ring of varieties with actions of finite groups}\label{sec:fGr}
A power structure over a ring $R$ (commutative, with unit) is a method to give sense to an expression of the form
$\left(A(t)\right)^m$, where $A(t)=1+a_1t+a_2t^2+\ldots\in 1+ t\cdot R[[t]]$ and $m\in R$ (\cite{GLM-MRL}).
It is defined by a map
 $$
 \left(1+t\cdot R[[t]]\right)\times R\to 1+t\cdot R[[t]]\quad\quad
 ((A(t),m)\mapsto\left(A(t)\right)^m)
 $$
 which satisfies the following properties:
 \begin{enumerate}
\item[1)]  $\left(A(t)\right)^0=1$;
\item[2)]  $\left(A(t)\right)^1=A(t)$;
\item[3)]  $\left(A(t)\cdot B(t)\right)^{m}=\left(A(t)\right)^{m}\cdot \left(B(t)\right)^{m}$;
\item[4)]  $\left(A(t)\right)^{m+n}=\left(A(t)\right)^{m}\cdot \left(A(t)\right)^{n}$;
\item[5)]  $\left(A(t)\right)^{mn}=\left(\left(A(t)\right)^{n}\right)^{m}$;
\item[6)]  $(1+a_1t+\ldots)^m=1+ma_1t+\ldots$;
\item[7)]  $\left(A(t^k)\right)^m =
\left(A(t)\right)^m\raisebox{-0.5ex}{$\vert$}{}_{t\mapsto t^k}$ for $k\in\Z_{>0}$.
 \end{enumerate}

Power structures over a ring are related with $\lambda$-structures on it. A $\lambda$-structure on a ring $R$
(sometimes called a pre-lambda structure: see, e.~g., \cite{Knutson}) is an additive-to-multiplicative homomorphism
$R\to 1+t\cdot R[[t]]$ (that is $a\mapsto \lambda_a(t)$, $\lambda_{a+b}(t)=\lambda_a(t)\cdot\lambda_b(t)$) such that
$\lambda_a(t)=1+at+\ldots$ A $\lambda$-structure on $R$
defines a power structure over it in the following way. A series $A(t)\in 1+ t\cdot R[[t]]$ can be in a unique way
represented as the product $\prod\limits_{k=1}^{\infty}\lambda_{b_k}(t^k)$. Then one defines $\left(A(t)\right)^m$
as $\prod\limits_{k=1}^{\infty}\lambda_{mb_k}(t^k)$. A power structure over $R$ permits to define a number of
$\lambda$-structures on it: for any series $\lambda_1(t)=1+t+b_2t^2+\ldots$ one can put
$\lambda_a(t)=\left(\lambda_1(t)\right)^a$.

The standard power structure over the ring $\Z$ of integers is defined by the standard exponent of a series.
A natural power structure over the Grothendieck ring $K_0(\Var)$ of complex quasi-projective varieties was
introduced in \cite{GLM-MRL}. It is defined by the formula
\begin{eqnarray}\label{eqn:geometric}
 &\ &(1+[A_1]t+[A_2]t^2+\ldots)^{[M]}=\label{Power}\\
 &=&1+\sum_{k=1}^{\infty}
 \left(\sum_{\{k_i\}:\sum ik_i=k}
 \left[\left(\left(M^{\sum_i k_i}\setminus\Delta\right)\times\prod_i A_i^{k_i}\right)\left/
 {\prod_i S_{k_i}}\right.\right]\right)\cdot t^k,\nonumber
\end{eqnarray}
where $A_i$, $i=1, 2, \ldots$, and $M$ are complex quasi-projective varieties, $\Delta$ is the big diagonal in
$M^{\sum_i k_i}$,
the group $S_{k_i}$ acts by the simultaneous permutations on the components of $M^{k_i}$ and on the components
of $A_i^{k_i}$.

For a topological space $X$ (say, a complex quasi-projective variety) with an action of a finite group $G$, one has the
notions of the orbifold Euler characteristic $\chi^{\rm orb}(X,G)$ and of the (orbifold) Euler characteristics
$\chi^{(k)}(X,G)$ of higher orders (see, e.~g., \cite{AS}, \cite{HH}, \cite{BrF}). They can be defined, in particular,
in the following way. Let $\chi^{(0)}(X,G):=\chi(X/G)$, where $\chi$ is the (additive) Euler characteristic defined
through cohomologies with compact support. For $k\ge 1$, let
$$
\chi^{(k)}(X,G):=\sum_{[g]\in{\rm Conj\,}G}\chi^{(k-1)}(X^{\langle g\rangle},C_G(g))\,,
$$
where ${\rm Conj\,}G$ is the set of conjugacy classes of elements of $G$, $g$ is a representative of the class $[g]$,
$X^{\langle g\rangle}$ is the fixed point set of $g$, $C_G(g)$ is the centralizer of the element $g$ in $G$.
The orbifold Euler characteristic $\chi^{\rm orb}(X,G)$ is the Euler characteristic of order $1$:
$\chi^{(1)}(X,G)$.

The orbifold Euler characteristic and  the Euler characteristics of higher orders can be considered as
functions on the Grothendieck ring $K_0^G(\Var)$ of quasi-projective $G$-varieties. These functions are group
homomorphisms, but not ring ones. A ring on which they are defined as ring homomorphisms to $\Z$ was introduced
in \cite{PEMS}.

Let us consider $G$-varieties, i.~e.\ pairs $(X,G)$ consisting of a complex quasi-projective variety $X$ and a finite
group $G$ acting on $X$. We shall call two pairs $(X,G)$ and $(X',G')$ {\em isomorphic} if there exist an isomorphism
$\psi:X\to X'$ of quasi-projective varieties and a group isomorphism $\varphi:G\to G'$ such that
$\psi(gx)=\varphi(g)\psi(x)$ for $x\in X$, $g\in G$. If $G$ is a subgroup of a finite group $H$, one has the
induction operation ${\rm Ind}_G^H$ which converts $G$-varieties to $H$-varieties. For a $G$-variety $X$,
${\rm Ind}_G^H X$ is the quotient of $H\times X$ by the right action of the group $G$ defined by
$(h,x)*g=(hg, g^{-1}x)$. (The action of $H$ on ${\rm Ind}_G^H X$ is defined in the natural way: $h_0(h,x)=(h_0h,x)$.)

\begin{definition}\label{def:Groth_with_actions} (see \cite{PEMS})
The Grothendieck ring of complex quasi-projective varieties with actions of finite groups is the abelian group
$K_0^{\rm fGr}(\Var)$ generated by the classes $[(X,G)]$ of $G$-varieties (for different finite groups $G$)
modulo the relations:
\begin{enumerate}
 \item[1)] if $(X,G)$ and $(X',G')$ are isomorphic, then $[(X,G)]=[(X',G')]$;
 \item[2)] if $Y$ is a Zariski closed $G$-subvariety of a $G$-variety $X$, then $[(X,G)]=[(Y,G)]+[(X\setminus Y,G)]$;
 \item[3)] if $(X,G)$ is a $G$-variety and $G$ is a subgroup of a finite group $H$, then
 $[(\Ind_G^H ,H)]=[(X,G)]$.
\end{enumerate}
The multiplication in $K_0^{\rm fGr}(\Var)$ is defined by the Cartesian product:
$$
[(X_1,G_1)]\times[(X_2,G_2)]=[(X_1\times X_2,G_1\times G_2)]\,.
$$
\end{definition}

The unit element in $K_0^{\rm fGr}(\Var)$ is $1=[ (\spec(\, \C\, ), (e))]$, the class of the one-point variety
with the action of the group with one element.

\begin{remark}
 This ring (under the name ``the Grothendieck ring of equivariant varieties'') was used in \cite{BGLL}.
\end{remark}

One has a natural ring homomorphism $p:K_0^{\rm fGr}(\Var)\to K_0(\Var)$ sending $[(X,G)]$ to $[X/G]$.

There are two (``geometric'') $\lambda$-structures on the ring $K_0^{\rm fGr}(\Var)$.
Let $X$ be a $G$-variety. The Cartesian power $X^n$ carries natural actions of the group $G^n$ (acting
component-wise) and of the group $S_n$ (acting by permuting the factors in $X^n$) and therefore an action
of their semi-direct product (the wreath-product) $G^n\rtimes S_n=G_n$.

\begin{definition}
 The Kapranov zeta function of $(X,G)$ is
 $$
 \zeta_{(X,G)}(t)=1+\sum_{n=1}^{\infty}[(X^n, G_n)]t^n\in 1+t\cdot K_0^{\rm fGr}(\Var)[[t]].
 $$
\end{definition}

In \cite{PEMS}, it is shown that the Kapranov zeta function is well-defined for elements of the ring
$K_0^{\rm fGr}(\Var)$ and defines a $\lambda$-ring structure on it.

Another $\lambda$-structure on the ring $K_0^{\rm fGr}(\Var)$ is defined by the generating series
of classes of equivariant configuration spaces of points in $X$. Let $\Delta_G$ be the big $G$-diagonal in the
Cartesian power $X^n$ of a $G$-variety $X$, i.~e.\ the set of $n$-tuples $(x_1,\ldots, x_n)\in X^n$ with at least
two of $x_i$ from the same $G$-orbit. The wreath product $G_n$ acts on $X^n\setminus \Delta_G$. Let
$$
\lambda_{(X,G)}(t)=1+\sum_{n=1}^{\infty}[(X^n\setminus \Delta_G, G_n)]t^n\in 1+t\cdot K_0^{\rm fGr}(\Var)[[t]]
$$
be the generating series of classes of equivariant configuration spaces of points in $X$.
In \cite{PEMS}, it is shown that the series $\lambda_{(X,G)}(t)$ defines a $\lambda$-structure on the ring
$K_0^{\rm fGr}(\Var)$ and there was given a geometric description of the power structure over $K_0^{\rm fGr}(\Var)$
corresponding to this $\lambda$-structure. (A geometric description of the power structure over the ring
$K_0^{\rm fGr}(\Var)$ corresponding to the $\lambda$-structure defined by the Kapranov zeta function is not known.)
We shall call these $\lambda$-structures (and the corresponding power structures) the symmetric product and
the configuration space ones.

In \cite{PEMS}, it was shown that the orbifold Euler characteristic and the higher order Euler characteristics
of an element of $K_0^{\rm fGr}(\Var)$ are well-defined (that is $\chi^{(k)}(\Ind_G^H X,H)=\chi^{(k)}(X,G)$) and
they are ring homomorphisms from $K_0^{\rm fGr}(\Var)$ to $\Z$.

One has a ring homomorphism $\alpha: K_0^{\rm fGr}(\Var)\to K_0^{\rm fGr}(\Var)$ defined by
$\alpha([(X,G)])=\sum_{[g]\in{\rm Conj\,}G}[(X^{\langle g\rangle}, C_G(g))]$ (see the notations above).
One can see that $\chi^{(k)}=\chi\circ p\circ\alpha^k$, where $\chi:K_0(\Var)\to\Z$ is the usual Euler characteristic.
Therefore $\alpha^k$ can be considered as a sort of a generalized version of the Euler characteristic of order $k$
with values in $K_0^{\rm fGr}(\Var)$.
(In \cite{BGLL} the homomorphism $\alpha$ is called the inertia homomorphism.)

\section{The universal Euler characteristic}\label{sec:Universal_Euler}
In~\cite{FAA2018}, there was defined the so-called {\em universal Euler characteristic} on the ring
$K_0^{\rm fGr}(\Var)$. Let $\RR$ be the subring of $K_0^{\rm fGr}(\Var)$ generated by the zero-dimensional
(i.~e.\ finite) $G$-varieties. It can be described in the following way. Let $\GG$ be the set of isomorphisms
classes of finite groups. Then $\RR$ is the Abelian group freely generated by the elements $T^{\mathfrak G}$
corresponding to the isomorphism classes ${\mathfrak G}\in\GG$ of finite groups. The generator $T^{\mathfrak G}$
is represented by the one-point set with the (unique) action of a representative $G$ of the class ${\mathfrak G}$.
The Krull-Schmidt theorem implies that $\RR$ is the ring of polynomials in the variables $T^{\mathfrak G}$
corresponding to the isomorphism classes of finite indecomposable groups. If $(X,G)$ is a $G$-variety, its universal
Euler characteristic is defined by
$$
\chi^{\rm un}(X,G):= \sum_{{\mathfrak H}\in\GG}\chi\left(X^{({\mathfrak H})}/G\right)\cdot T^{\mathfrak H},
$$
where $X^{({\mathfrak H})}$ is the set of points $x\in X$ with the isotropy subgroup $G_x=\{g\in G: gx=x\}$
belonging to the class $\mathfrak H$.

\begin{remark}
 This characteristic can be regarded as a universal one in the topological category.
\end{remark}

The orbifold Euler characteristic and the higher order Euler characteristics define ring homomorphisms from $\RR$
to $\Z$.

One has a natural $\lambda$-ring structure on $\RR$ defined by an analogue of the Kapranov zeta function (see \cite{FAA2018})
and the maps $\chi^{(k)}$ are $\lambda$-ring homomorphisms with respect to this $\lambda$-structure.

One has the commutative diagram of ring homomorphisms
$$
\xymatrix{
K_0^{fGr}(\Var)\ar[rr]^{\ \ \ \chi^{\rm un}}\ar[d]_{\alpha^k}\ar[rd]^{\chi^{(k)}}
&& \RR\ar[d]^{\alpha^k}\ar[ld]_{\ \ \chi^{(k)}}\\
K_0^{fGr}(\Var)\ar[r]^{\ \ \ \chi^{(0)}}& \Z & \RR.\ar[l]_{\ \ \chi^{(0)}}
}
$$

\section{Macdonald type equations and $\lambda$-structure homomorphisms}\label{sec:Macdonald}
The classical Macdonald equation describes the generating series of the Euler characteristics of the symmetric
products of a space:
\begin{equation}\label{eqn:classical_Macdonald}
 1+\sum_{n=1}^{\infty}\chi(S^nX)\cdot t^n=(1-t)^{-\chi(X)},
\end{equation}
where $X$ is a topological space and $S^nX=X^n/S_n$ is its $n$th symmetric product. One also has a Macdonald type
equation for the generating series of the Euler characteristics of the configuration spaces of subsets of points
in $X$. Let $M_nX=(X^n\setminus \Delta)/S_n$ be the configuration space of unordered $n$-tuples of points in $X$,
where $\Delta$ is the big diagonal in $X^n$. Then one has
\begin{equation}\label{eqn:Macdonald_for_conf}
 1+\sum_{n=1}^{\infty}\chi(M_nX)\cdot t^n=(1+t)^{\chi(X)}.
\end{equation}
There exist equations for the generating series of the Hodge-Deligne polynomials of the symmetric products
of a complex quasi-projective variety and of the configuration spaces of subsets of points in it
(see, e.~g., \cite{PSIM2007}).

These equations are related with $\lambda$-ring homomorphisms (and therefore with power structure homomorphisms)
from the Grothendieck ring $K_0(\Var)$ of complex quasi-projective varieties to $\Z$ and to $\Z[u,v]$ respectively.
For a ring homomorphism $\varphi:R_1\to R_2$, one has a natural map (a group homomorphism)
$\varphi_*:1+t\cdot R_1[[t]]\to 1+t\cdot R_2[[t]]$ obtained by applying $\varphi$ to the coefficients of a series.
If $R_1$ and $R_2$ are $\lambda$-rings, a ring homomorphism $\varphi:R_1\to R_2$ is said to be a $\lambda$-ring
homomorphism if $\lambda_{\varphi(a)}(t)=\varphi_*\lambda_a(t)$ for $a\in R_1$. If $R_1$ and $R_2$ are rings
with power structures, $\varphi:R_1\to R_2$ is a power structure homomorphism if
$\varphi_*\left(A(t)^m\right)=\left(\varphi_*(A(t))\right)^{\varphi(m)}$. A $\lambda$-ring homomorphism induces
a power structure homomorphism for the corresponding power structures and vise-versa.

Equations~(\ref{eqn:classical_Macdonald}) and~(\ref{eqn:Macdonald_for_conf}) are related with the following
$\lambda$-ring structures on the Grothendieck ring $K_0(\Var)$ and on $\Z$. The Kapranov zeta function of a
variety $X$ (or of its class $[X]\in K_0(\Var)$) is
$$
\zeta_{X}(t)=1+\sum_{n=1}^{\infty}[S^nX]\cdot t^n=(1-t)^{-[X]}.
$$
One can see that $\zeta_{X}(t)$ defines a $\lambda$-ring structure on $K_0(\Var)$. Another $\lambda$-ring
structure on $K_0(\Var)$ is defined by the series
$$
\lambda_{X}(t)=1+\sum_{n=1}^{\infty}[M_nX]\cdot t^n=(1+t)^{[X]}.
$$
The corresponding $\lambda$-structures on $\Z$ are $\check{\zeta}_n(t)=(1-t)^{-n}$ and $\check{\lambda}_n(t)=(1+t)^n$
respectively. Equations~(\ref{eqn:classical_Macdonald}) and (\ref{eqn:Macdonald_for_conf}) mean that the Euler
characteristic is a $\lambda$-ring homomorphism for the corresponding $\lambda$-structures.
The both $\lambda$-structures on $K_0(\Var)$ (as well as the both $\lambda$-structures on $\Z$)
induce the same power structure over $K_0(\Var)$ (over $\Z$ respectively). The corresponding power structure
over $K_0(\Var)$ is given by Equation~(\ref{eqn:geometric}).

The Macdonald type equations for the orbifold Euler characteristic and for the higher order Euler characteristics
look like following.

\begin{theorem}
 The map $\chi^{(k)}:K_0^{\rm fGr}(\Var)\to \Z$ is a $\lambda$-ring homomorphism
 for the $\lambda$-structures on the source $K_0^{\rm fGr}(\Var)$ and on the target $\Z$
 defined by the Kapranov zeta function $\zeta_{(X,G)}(t)$ and by the series
 $$
 \lambda^{(k)}_{n}(t)=
 \left(\prod_{r_1,\ldots, r_k\ge 1}
 \left(1-t^{r_1\cdot\ldots\cdot r_k}\right)^{r_2\cdot r_3^2\cdot\ldots\cdot r_k^{k-1}}\right)^{n}
 $$
 respectively, i.~e.
 $$
 \chi^{(k)}\left(\zeta_{(X,G)}(t)\right)=
 \lambda^{(k)}_{\chi^{(k)}(X,G)}(t)\,.
 $$
\end{theorem}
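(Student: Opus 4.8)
The statement asserts that $\chi^{(k)}$ intertwines the Kapranov $\lambda$-structure on $K_0^{\rm fGr}(\Var)$ with the $\lambda$-structure on $\Z$ given by $\lambda^{(k)}_n(t)$. Since $\chi^{(k)}$ is already known (from \cite{PEMS}) to be a ring homomorphism, and a $\lambda$-ring homomorphism need only be checked on generators via its interaction with the zeta functions, the plan is to verify the single identity $\chi^{(k)}_*\left(\zeta_{(X,G)}(t)\right)=\lambda^{(k)}_{\chi^{(k)}(X,G)}(t)$ on classes of $G$-varieties $(X,G)$. The left-hand side is the series whose $t^n$-coefficient is the integer $\chi^{(k)}(X^n,G_n)$, where $G_n=G^n\rtimes S_n$ is the wreath product acting on $X^n$. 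So the heart of the matter is to compute $\chi^{(k)}$ of these wreath-product powers and to recognize the resulting generating series as the displayed product raised to the power $\chi^{(k)}(X,G)$.

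First I would recall the combinatorial description of higher-order Euler characteristics of wreath products. The key structural input is the recursive definition $\chi^{(k)}(X,G)=\sum_{[g]\in{\rm Conj\,}G}\chi^{(k-1)}(X^{\langle g\rangle},C_G(g))$, together with the multiplicativity already established. The essential classical fact (going back to the computations underlying \cite{Tamanoi}, and already implicit in the fact that $\chi^{(k)}$ is a ring homomorphism to $\Z$) is that conjugacy classes in $G_n=G^n\rtimes S_n$, and the fixed-point sets and centralizers arising when one iterates the recursion $k$ times, organize themselves according to partition/tuple data. Concretely, iterating the defining recursion $k$ times replaces a single group by the data of a commuting $k$-tuple of elements (equivalently a homomorphism from $\Z^k$), and for wreath products this commuting-tuple count factors as an exponential in the base datum. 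I would therefore reduce the theorem to the generating-function identity
\begin{equation*}
1+\sum_{n=1}^{\infty}\chi^{(k)}(X^n,G_n)\,t^n=\Bigl(\Phi^{(k)}(t)\Bigr)^{\chi^{(k)}(X,G)},
\end{equation*}
where $\Phi^{(k)}(t)$ is the base series to be identified, and where the exponentiation is the standard power structure over $\Z$.

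The main computational step is then to identify $\Phi^{(k)}(t)$ with
\begin{equation*}
\prod_{r_1,\ldots,r_k\ge 1}\bigl(1-t^{r_1\cdots r_k}\bigr)^{r_2 r_3^2\cdots r_k^{k-1}}.
\end{equation*}
The natural route is to use the exponential formula for wreath products: the generating series $\sum_n \chi^{(k)}(X^n,G_n)t^n$ is the plethystic exponential of a ``connected'' contribution indexed by the cyclic structure, and because $\chi^{(k)}$ is multiplicative and additive the whole series is a pure power of a base series depending only on the numerical invariant $\chi^{(k)}(X,G)=:m$. To pin down the exponent $r_2 r_3^2\cdots r_k^{k-1}$, I would count, at each level of the $k$-fold iterated recursion, the factors of the form $C_{G_n}(g)$ and the cyclic fixed-point contributions; the indices $r_1,\dots,r_k$ record the cycle lengths appearing at the successive levels of iteration, and the weight $r_2 r_3^2\cdots r_k^{k-1}$ is exactly the multiplicity with which a given tower of cyclic data contributes when one tracks centralizers of cyclic permutations across the $k$ steps. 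Setting $k=1$ should recover $\prod_{r_1\ge 1}(1-t^{r_1})^{1}$, i.e. the orbifold Macdonald equation of \cite{Tamanoi}, which serves as a sanity check and as the base case of an induction on $k$.

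I expect the main obstacle to be bookkeeping the centralizer and fixed-point data through the $k$-fold iteration of the recursion on wreath products, specifically explaining why the exponent is precisely $r_2 r_3^2\cdots r_k^{k-1}$ rather than some other symmetric expression in the $r_i$. The cleanest way to handle this is by induction on $k$: assume the formula for $\chi^{(k-1)}$ applied to the relevant centralizers (which are themselves wreath-type products, since centralizers in $G_n$ of an element with cycle type are products of smaller wreath products), substitute into the defining recursion, and check that summing over conjugacy classes of $G_n$ multiplies the exponent pattern $r_2\cdots r_{k-1}^{k-2}$ by exactly one more factor of the form $r_k^{k-1}$ coming from the new layer. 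Verifying that the substitution $t\mapsto t^{r_k}$-type reindexing (property 7 of the power structure) matches the product $r_1\cdots r_k$ in the exponent of $t$, and that the arising multiplicities match $r_k^{k-1}$, is the crux; once that is confirmed, multiplicativity of $\chi^{(k)}$ and the standard properties of the power structure over $\Z$ assemble the full $\lambda$-homomorphism statement.
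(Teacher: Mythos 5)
First, be aware that the paper does not actually prove this theorem: it is stated as a known result, imported from \cite{Tamanoi} and \cite{PEMS}, so there is no in-paper argument to compare yours against line by line. The closest written-out computation is the proof of the theorem on $\alpha$ in Section~\ref{sec:Instead_Macdonald}, and your plan invokes exactly that machinery: parametrizing conjugacy classes of $G_n=G^n\rtimes S_n$ by types $\{m_r(c)\}$, identifying the fixed-point set of $({\bf g},s)$ with $\prod (X^{\langle c\rangle})^{m_r(c)}$ and the centralizer with $\prod\left(C_G(c)\langle a_{r,c}\rangle\right)_{m_r(c)}$. Your reduction to a single generating-series identity on the generators $[(X,G)]$ is legitimate (both sides of the $\lambda$-homomorphism condition are additive-to-multiplicative), and induction on $k$ is the right organizing principle, precisely because the centralizers arising at each level are again wreath products, of the extended groups $C_G(c)\langle a_{r,c}\rangle$, so the recursion closes up. In that sense your route is the standard one and is consistent with the paper's methodology.

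The gap is that you stop exactly at the decisive step. The entire content of the theorem is the exponent $r_2 r_3^2\cdots r_k^{k-1}$, and you explicitly defer its derivation as ``the crux''. To close it you must track how the auxiliary generator $a_{r,c}$, which acts trivially on $X^{\langle c\rangle}$ but satisfies $a_{r,c}^r=c$, inflates the count of conjugacy classes and centralizers at the next level of the recursion: it is the presence of these central elements that produces the successive factors $r_j^{j-1}$, and without writing out that bookkeeping the formula is asserted, not proved. Separately, the $k=1$ sanity check you propose but do not perform would have exposed an issue: Tamanoi's orbifold Macdonald equation reads $\sum_n\chi^{\rm orb}(X^n,G_n)t^n=\prod_{r\ge1}(1-t^r)^{-\chi^{\rm orb}(X,G)}$, whereas $\lambda^{(1)}_n(t)$ as displayed equals $\prod_{r\ge1}(1-t^r)^{n}$; moreover $\lambda^{(k)}_n(t)=1-nt+\ldots$ violates the normalization $\lambda_a(t)=1+at+\ldots$ that the paper itself imposes on a $\lambda$-structure. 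The exponent in the target series must therefore be $-n$ rather than $n$ (a sign slip in the statement), and actually carrying out the base case is not optional here: it is what pins down the sign your induction has to propagate.
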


\begin{remark}
 The map $\chi^{(k)}:K_0^{\rm fGr}(\Var)\to\Z$ is not a $\lambda$-ring homomorphism
 with respect to the configuration space $\lambda$-structure on $K_0^{\rm fGr}(\Var)$.
\end{remark}

\section{Generalized Euler characteristics of higher orders as homomorphisms from
$K_0^{\rm fGr}(\Var)$}\label{sec:Generalized-higher}
In \cite{PEMS}, it was shown that the orbifold Euler characteristic and the Euler characteristics of higher orders
are ring homomorphism (moreover, $\lambda$-ring homomorphisms) from the Grothendieck ring $K_0^{\rm fGr}(\Var)$
to $\Z$. The notions of generalized (``motivic'') orbifold Euler characteristic and  generalized (``motivic'') 
Euler characteristic of higher orders were introduced by the authors in \cite{PSIM2007} (as a refinement of the
orbifold Hodge-Deligne polynomial from \cite{Wang}) and in \cite{GPh}. It was defined as an invariant of a
complex quasi-projective manifold with the action of a finite group  and took values in the extension of the
Grothendieck ring $K_0(\Var)$ of  complex quasi-projective varieties by the rational powers of the class of the
affine line and was not defined on a ring.
In \cite{PEMS}, these invariants were considered as functions on the Grothendieck ring of varieties with equivariant
vector bundles. Here we define versions of them (with the corresponding weight  $\underline{\varphi}=0$ in terms
of \cite{GPh} and \cite{PEMS}) as ring homomorphisms from $K_0^{\rm fGr}(\Var)$ to $K_0(\Var)$.

The homomorphism $p:K_0^{\rm fGr}(\Var) \to K_0(\Var)$ sending the class $[(X, G)]$ to the class $[X/G$] is an
additive function on $K_0^{\rm fGr}(\Var)$ and therefore can be considered as a generalized version of the Euler
chararteristic. Let us call it \emph{generalized Euler characteristic of order} $0$ and denote by  
$\chi_{\rm g}^{(0)}(X,G)$. Let $X$ be a G-variety.

\begin{definition}\label{def:Generalized_Euler}
The \emph{generalized Euler characteristic} $\chi_{\rm g}^{(k)}(X,G)$ \emph{of order} $k$  is defined by 
$$
\chi_{\rm g}^{(k)}(X,G)= \sum_{[g]\in{\rm Conj\,}G} \chi_{\rm g}^{(k-1)}(X^{\langle g\rangle},C_{G}(g))\in K_0(\Var),
$$
where the sum is over the conjugacy classes $[g]$ of elements of $G$, $g$ is a representative of the class $[g]$, 
$X^{\langle g\rangle}$ is the fixed point set of $g$ and $C_{G}(g)$ is the centralizer of $g$ in $G$.
\end{definition}

\begin{proposition}
 The generalized Euler characteristic $\chi_{\rm g}^{(k)}$ of order $k$  is a well defined ring homomorphism  
from $K_0^{\rm fGr}(\Var)$ to $K_0(\Var)$.
\end{proposition}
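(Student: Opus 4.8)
The plan is to avoid re-verifying the defining relations directly and instead route everything through the two ring homomorphisms $p$ and $\alpha$ already available from Section~\ref{sec:fGr}. The key observation I would isolate first is that the recursively defined invariant coincides with an iterate of the inertia homomorphism, namely
$$
\chi_{\rm g}^{(k)}(X,G)=p\bigl(\alpha^k([(X,G)])\bigr).
$$
Granting this identity, the proposition is immediate: a composition of ring homomorphisms is a ring homomorphism, and any invariant that factors through the class $[(X,G)]\in K_0^{\rm fGr}(\Var)$ automatically respects the isomorphism, additivity, and induction relations~1)--3) of Definition~\ref{def:Groth_with_actions}, so it is well defined on the ring.

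First I would establish the displayed identity by induction on $k$. The base case $k=0$ is nothing but the definition $\chi_{\rm g}^{(0)}(X,G)=[X/G]=p([(X,G)])$. For the inductive step I would substitute the inductive hypothesis $\chi_{\rm g}^{(k-1)}=p\circ\alpha^{k-1}$ into Definition~\ref{def:Generalized_Euler}, obtaining
$$
\chi_{\rm g}^{(k)}(X,G)=\sum_{[g]\in{\rm Conj\,}G}p\Bigl(\alpha^{k-1}\bigl([(X^{\langle g\rangle},C_G(g))]\bigr)\Bigr).
$$
Because $p\circ\alpha^{k-1}$ is additive, the finite sum can be pulled inside, and the resulting bracketed expression $\sum_{[g]}[(X^{\langle g\rangle},C_G(g))]$ is exactly $\alpha([(X,G)])$ by the definition of the inertia homomorphism. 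Hence the right-hand side equals $p(\alpha^{k-1}(\alpha([(X,G)])))=p(\alpha^k([(X,G)]))$, which closes the induction.

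With the identity in hand I would conclude directly. Since $\alpha\colon K_0^{\rm fGr}(\Var)\to K_0^{\rm fGr}(\Var)$ and $p\colon K_0^{\rm fGr}(\Var)\to K_0(\Var)$ are both ring homomorphisms (recorded in Section~\ref{sec:fGr}), the iterate $p\circ\alpha^k$ is a ring homomorphism. The identity shows that $\chi_{\rm g}^{(k)}$ agrees with $p\circ\alpha^k$ on every generator $[(X,G)]$; in particular its value depends only on the class $[(X,G)]$, so it descends to a well-defined map on $K_0^{\rm fGr}(\Var)$, and this map is the ring homomorphism $p\circ\alpha^k$.

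The genuinely substantive input is not in the short induction but in the multiplicativity of $\alpha$, which is what ultimately powers multiplicativity of $\chi_{\rm g}^{(k)}$. If one wished a self-contained argument, the hard part would be checking this directly: for $G=G_1\times G_2$ one needs that conjugacy classes factor as products, that $(X_1\times X_2)^{\langle(g_1,g_2)\rangle}=X_1^{\langle g_1\rangle}\times X_2^{\langle g_2\rangle}$, and that $C_{G_1\times G_2}(g_1,g_2)=C_{G_1}(g_1)\times C_{G_2}(g_2)$, together with compatibility of $\alpha$ with the induction relation~3). Since the ring homomorphism property of $\alpha$ is already stated in Section~\ref{sec:fGr}, I would invoke it rather than reprove it, so that the present proposition needs no further computation.
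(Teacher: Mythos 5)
Your argument is correct, but it is organized differently from the paper's. The paper proves the proposition head-on: it checks that $\chi_{\rm g}^{(k)}$ respects the three defining relations of Definition~\ref{def:Groth_with_actions}, observing that 1) and 2) are immediate and handling the induction relation 3) by induction on $k$ via \cite[Lemma 1]{PEMS}. You instead establish the identity $\chi_{\rm g}^{(k)}=p\circ\alpha^k$ on generators (by the same induction on $k$) and then inherit both well-definedness and multiplicativity from the already-recorded ring homomorphism properties of $p$ and $\alpha$. These are not really independent arguments: the only nontrivial point in either one is compatibility with the induction relation, and the well-definedness of $\alpha$ on $K_0^{\rm fGr}(\Var)$ --- which you invoke as a black box from Section~\ref{sec:fGr} --- is itself exactly the content of \cite[Lemma 1]{PEMS} that the paper's proof cites. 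So you have relocated the substantive input rather than replaced it, and your last paragraph correctly identifies where the real work (multiplicativity of $\alpha$ and its compatibility with relation 3)) would sit in a self-contained treatment. What your packaging buys is twofold: the multiplicativity of $\chi_{\rm g}^{(k)}$, which the paper's proof does not explicitly address, comes for free from $p$ and $\alpha$ being ring homomorphisms; and the identity $p\circ\alpha^k=\chi_{\rm g}^{(k)}$ that you prove is precisely the one the paper asserts without proof in the Introduction and uses again in Section~\ref{sec:Instead_Macdonald}, so your induction fills in a small gap elsewhere in the text. The paper's version is shorter and keeps the proposition independent of the (unproved-in-this-paper) assertion that $\alpha$ is a ring homomorphism.
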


\begin{proof}
 One has to show that $\chi_{\rm g}^{(k)}$ respects the relations 1)-3) of Definition~\ref{def:Groth_with_actions}.
 This obviusly holds for 1) and 2).
 The fact that $\chi_{\rm g}^{(k)}$ respects the condition 3) (the induction relation) can be proved by
 induction on $k$: it is obvious for $k=0$ and the statement for an arbitrary $k$ follows from the statement
 for $k-1$ due to \cite[Lemma 1]{PEMS}.
\end{proof}

 The following statement is a specification of \cite[Theorem 4]{PEMS} or of \cite[Theorem 1]{GPh}.

\begin{theorem}
 The map $\chi^{(k)}_{\rm g}:K_0^{\rm fGr}(\Var)\to K_0(\Var)$ is a $\lambda$-ring homomorphism
 for the $\lambda$-structures on the source $K_0^{\rm fGr}(\Var)$ and on the target $K_0(\Var)$
 defined by the Kapranov zeta function $\zeta_{(X,G)}(t)$ and by the series
 $$
 \lambda^{(k)}_{X}(t)=
 \left(\prod_{r_1,\ldots, r_k\ge 1}
 \left(1-t^{r_1\cdot\ldots\cdot r_k}\right)^{r_2\cdot r_3^2\cdot\ldots\cdot r_k^{k-1}}\right)^{[X]},
 $$
 i.~e.
 $$
 \chi^{(k)}_{\rm g}\left(\zeta_{(X,G)}(t)\right)=
 \lambda^{(k)}_{\chi^{(k)}_{\rm g}(X,G)}(t)\,.
 $$
\end{theorem}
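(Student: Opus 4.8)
\noindent The statement refines the integer-valued Theorem of Section~\ref{sec:Macdonald}, and I would prove it by the same mechanism, carrying everything through in $K_0(\Var)$ rather than in $\Z$. Since $\chi^{(k)}_{\rm g}$ is a ring homomorphism (by the preceding Proposition) and both the Kapranov series $\zeta_{(X,G)}(t)$ and the target series $\lambda^{(k)}_{\bullet}(t)$ are additive-to-multiplicative in their parameter, it suffices to check the identity $(\chi^{(k)}_{\rm g})_*\zeta_{(X,G)}(t)=\lambda^{(k)}_{\chi^{(k)}_{\rm g}(X,G)}(t)$ on the generators $[(X,G)]$ of $K_0^{\rm fGr}(\Var)$; it then extends to sums and differences automatically. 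The whole argument is an induction on the order $k$, driven by the factorization $\chi^{(k)}_{\rm g}=\chi^{(k-1)}_{\rm g}\circ\alpha$, which is immediate from Definition~\ref{def:Generalized_Euler} and the additivity of $\chi^{(k-1)}_{\rm g}$. The base case $k=0$ is the assertion that $p$ is a $\lambda$-homomorphism for the symmetric-product structures: because $X^n/G_n=S^n(X/G)$, one has $p_*\zeta_{(X,G)}(t)=\zeta_{[X/G]}(t)=(1-t)^{-[X/G]}$, the $k=0$ instance of the target series.

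\noindent The engine of the inductive step --- and the main obstacle --- is a Macdonald-type product formula for $\alpha$ applied to the Kapranov zeta function, namely
\[
\alpha_*\zeta_{(X,G)}(t)=\prod_{r\ge1}\ \prod_{[c]\in{\rm Conj\,}G}\zeta_{(X^{\langle c\rangle},\,C_G(c)\times\Z_r)}(t^r),
\]
where in each factor the cyclic group $\Z_r$ acts trivially on $X^{\langle c\rangle}$. Establishing this is a bookkeeping over the conjugacy data of the wreath products $G_n=G^n\rtimes S_n$: a conjugacy class is specified by attaching to each cycle of the underlying permutation a conjugacy class $[c]$ of $G$ (its cycle product); for a representative $h$ whose length-$r$, class-$[c]$ cycles occur with multiplicity $m_{r,[c]}$, the fixed locus $(X^n)^{\langle h\rangle}$ is a product of copies of $X^{\langle c\rangle}$ and the centralizer $C_{G_n}(h)$ is the corresponding product of groups $(C_G(c)\times\Z_r)\wr S_{m_{r,[c]}}$. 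Summing over all classes and multiplicities and reading the $m$-sums as Kapranov zeta functions produces the displayed Euler product. The one genuinely delicate point is the identification of the centralizer of a single length-$r$ cycle and the verification that the surplus cyclic factor acts trivially on the fixed locus: a direct computation shows this centralizer is a central product of $C_G(c)$ with a cyclic group, and that its surplus cyclic part, together with $c$ itself, fixes $X^{\langle c\rangle}$ pointwise.

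\noindent Granting the product formula, I would apply $\chi^{(k-1)}_{\rm g}$ to both sides. As it is a ring homomorphism, $(\chi^{(k-1)}_{\rm g})_*$ converts the Euler product into a product of series, and the inductive hypothesis rewrites each factor as $\lambda^{(k-1)}_{\chi^{(k-1)}_{\rm g}(X^{\langle c\rangle},\,C_G(c)\times\Z_r)}(t^r)$. A trivially acting cyclic factor multiplies the order-$(k-1)$ characteristic by $r^{k-1}$ --- one factor $r$ for each of the $k-1$ iterated passes to conjugacy classes, since at every pass the trivially acting $\Z_r$ multiplies the number of classes by $r$ while leaving all fixed loci and quotients unchanged. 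This estimate is robust: it holds equally for the central product above, because both the surplus cyclic part and $c$ act trivially on $X^{\langle c\rangle}$. Hence $\chi^{(k-1)}_{\rm g}(X^{\langle c\rangle},C_G(c)\times\Z_r)=r^{k-1}\,\chi^{(k-1)}_{\rm g}(X^{\langle c\rangle},C_G(c))$. Pulling out the integer exponent $r^{k-1}$ by power-structure axioms~(5) and~(4), combining the product over $[c]$, and invoking the defining relation $\sum_{[c]}\chi^{(k-1)}_{\rm g}(X^{\langle c\rangle},C_G(c))=\chi^{(k)}_{\rm g}(X,G)$ collapses the right-hand side to $\prod_{r\ge1}\bigl(\lambda^{(k-1)}_{\chi^{(k)}_{\rm g}(X,G)}(t^r)\bigr)^{r^{k-1}}$.

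\noindent Finally I would match this against the closed form. Writing $B_k(t)=\prod_{r_1,\dots,r_k\ge1}(1-t^{r_1\cdots r_k})^{r_2 r_3^2\cdots r_k^{k-1}}$, a reindexing that pairs the outer variable with the last index $r_k$ gives the elementary identity $B_k(t)=\prod_{s\ge1}B_{k-1}(t^s)^{s^{k-1}}$, so that raising to the power $\chi^{(k)}_{\rm g}(X,G)$ identifies the collapsed product with the series $\lambda^{(k)}_{\chi^{(k)}_{\rm g}(X,G)}(t)$ of the theorem and closes the induction. The only care needed here is the exponent bookkeeping in this last reindexing and the fixing of the overall normalization from the $k=0$ symmetric-product case (the Kapranov zeta function $(1-t)^{-[X/G]}$), which determines the sign of the exponents in the base series; as a sanity check, for $k=1$ this recovers the orbifold Macdonald equation with generating series $\prod_{r\ge1}(1-t^r)^{-\chi^{(1)}_{\rm g}(X,G)}$.
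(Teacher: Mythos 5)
Your outline is correct, but there is nothing in the paper to compare it against line by line: the theorem is stated with no proof, as ``a specification of \cite[Theorem 4]{PEMS} or of \cite[Theorem 1]{GPh}''. The argument you give is essentially the one behind those references, and its engine --- the Euler-product formula for $\alpha$ applied to the Kapranov zeta function --- is exactly the Theorem that this paper does prove in Section~\ref{sec:Instead_Macdonald}, by the same wreath-product conjugacy-class bookkeeping (types $\{m_r(c)\}$, fixed loci $\prod (X^{\langle c\rangle})^{m_r(c)}$, centralizers $\prod (C_G(c)\langle a_{r,c}\rangle)_{m_r(c)}$). Your induction on $k$ via $\chi^{(k)}_{\rm g}=\chi^{(k-1)}_{\rm g}\circ\alpha$, the factor $r^{k-1}$, and the reindexing $B_k(t)=\prod_{s\ge 1}B_{k-1}(t^s)^{s^{k-1}}$ all check out. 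Two points of care. First, your displayed product formula with $C_G(c)\times\Z_r$ is not an identity in $K_0^{\rm fGr}(\Var)[[t]]$: the correct centralizer factor is the central product $C_G(c)\langle a_{r,c}\rangle$ with $a_{r,c}^r=c$ (this is how the paper defines $\alpha_r$), and $C_G(c)\times\Z_r$ can be a non-isomorphic group, distinguished for instance by $\chi^{\rm un}$. You acknowledge this afterwards, and since both groups give the same value of $\chi^{(k-1)}_{\rm g}$ (namely $r^{k-1}$ times that of $(X^{\langle c\rangle},C_G(c))$, which one should verify by the same induction for the central product, using that $a_{r,c}$ and $c$ both act trivially), the slip does not affect the conclusion; but the clean intermediate statement is the paper's $\alpha\left(\zeta_{(X,G)}(t)\right)=\prod_{r}\zeta_{\alpha_r([(X,G)])}(t^r)$, to which one then applies $(\chi^{(k-1)}_{\rm g})_*$. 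Second, your $k=1$ sanity check $\prod_{r}(1-t^r)^{-\chi^{(1)}_{\rm g}(X,G)}$ has the opposite sign to the exponent $[X]$ in the theorem's displayed series $\lambda^{(k)}_X(t)$; your normalization from the $k=0$ case $\zeta_{[X/G]}(t)=(1-t)^{-[X/G]}$ is the consistent one, so the exponent in the statement should be $-[X]$ --- a sign slip in the printed formula rather than in your argument.
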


\begin{remark}
 The map $\chi^{(k)}_{\rm g}$ is not a $\lambda$-ring homomorphism
 with respect to the configuration space $\lambda$-structure on $K_0^{\rm fGr}(\Var)$.
\end{remark}

One has the commutative diagram of ring homomorphisms
$$
\xymatrix{
&K_0^{fGr}(\Var)\ar[d]^{\chi^{(k)}_{\rm g}}\ar[ld]_{\alpha^k}\ar[rd]^{\chi^{(k)}}&\\
K_0^{fGr}(\Var)\ar[r]^{\ \ p}& K_0(\Var)\ar[r]^{\ \ \chi}&\Z.
}
$$

\section{A substitute of a Macdonald type equation for the homomorphism $\alpha$}\label{sec:Instead_Macdonald}
As it follows from Definition~\ref{def:Generalized_Euler}, the composition of the homomorphism $\alpha^k$ with
the natural map
$p:K_0^{\rm fGr}(\Var)\to K_0(\Var)$ coincides with the generalized (``motivic'') Euler characteristic of order $k$
(with the generalized orbifold Euler characteristic for $k=1$) computed without the fermion shift (i.~e.\ with
$\underline{\varphi}=0$ in terms of \cite{GPh}). Its composition with the usual Euler characteristic homomorphism
$\chi:K_0(\Var)\to\Z$ gives the usual (orbifold) Euler characteristic of order $k$. One has Macdonald type equations
for the (orbifold) Euler characteristic of order $k$ (\cite{Tamanoi}) and for the generalized Euler characteristic
of order $k$ (\cite{GPh}). Here we shall give a version of these equations for the homomorphism $\alpha$ (which
reduces to the Macdonald type equations for the generalized orbifold Euler characteristic and for the orbifold
Euler characteristic after applying the homomorphisms $p$ and $\chi\circ p$ respectively). Let
$\alpha_r:K_0^{\rm fGr}(\Var)\to K_0^{\rm fGr}(\Var)$ be defined in the following way. Let $(X,G)$ be a $G$-variety.
For a representative $g$ of a conjugacy class $[g]\in {\rm Conj\,}G$, the centralizer $C_G(g)$ acts on the fixed
point set $X^{\langle g\rangle}$ of the element $g$. Let $C_G(g)\langle a_{r,g}\rangle$ be the group generated by
$C_G(g)$ and by an additional element $a_{r,g}$ commuting with all the elements of $C_G(g)$ and such that
$a_{r,g}^r=g$. For $r=1$, the group $C_G(g)\langle a_{r,g}\rangle$ coincides with $C_G(g)$. One has an action
of the group $C_G(g)\langle a_{r,g}\rangle$ on $X^{\langle g\rangle}$ assuming $a_{r,g}$ to act trivially.
Let $\alpha_r:K_0^{\rm fGr}(\Var)\to K_0^{\rm fGr}(\Var)$ be defined by
$$
\alpha_r\left([(X,G)]\right):=\sum_{[g]\in {\rm Conj\,}G}[(X^{\langle g\rangle},C_G(g)\langle a_{r,g}\rangle)]\,.
$$
In particular $\alpha_1=\alpha$.

\begin{theorem}
\begin{equation}
 \alpha\left(\zeta_{(X,G)}(t)\right)=\prod_{r=1}^{\infty} \zeta_{\alpha_r\left([(X,G)]\right)}(t^r)\,.
\end{equation}
\end{theorem}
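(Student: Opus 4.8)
The plan is to expand the left-hand side coefficient-by-coefficient and to recognize the answer as the Euler product on the right. Since $\alpha$ acts on the coefficients of the series, the coefficient of $t^n$ in $\alpha(\zeta_{(X,G)}(t))$ is
$$
\alpha([(X^n,G_n)])=\sum_{[\gamma]\in{\rm Conj\,}G_n}[((X^n)^{\langle\gamma\rangle},C_{G_n}(\gamma))],
$$
so the whole statement reduces to understanding fixed-point loci and centralizers of elements of the wreath product $G_n=G^n\rtimes S_n$. First I would recall the classical parametrization of the conjugacy classes of $G_n$: for $\gamma=(\underline g,\sigma)$ one decomposes $\sigma$ into cycles and attaches to each cycle of length $r$ its cycle product $c\in G$ (a suitable ordered product $g_{i_r}\cdots g_{i_1}$ along the cycle), well defined up to conjugacy; the class $[\gamma]$ is then recorded by the \emph{type} $\{m_{r,[c]}\}$, where $m_{r,[c]}$ counts the length-$r$ cycles with cycle product in $[c]\in{\rm Conj\,}G$, subject to $\sum_{r,[c]}r\,m_{r,[c]}=n$. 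Thus the sum over ${\rm Conj\,}G_n$ becomes a sum over such types.

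The heart of the argument is the single-cycle case. I would take $\gamma_0=((c,e,\dots,e),(1\,2\cdots r))$ as a normal form for a length-$r$ cycle with cycle product $c$. A direct check of the fixed-point equations (of the form $x_i=g_i x_{\sigma^{-1}(i)}$) shows that $(X^r)^{\langle\gamma_0\rangle}$ is the diagonal copy of $X^{\langle c\rangle}$. For the centralizer, the projection to $S_r$ lands in $\langle\sigma\rangle\cong\Z/r$, its kernel is the diagonal $C_G(c)$, and $\gamma_0$ itself is a lift of the generator with $\gamma_0^r$ equal to the diagonal $c$ and with $\gamma_0$ central in the centralizer; a comparison of orders then shows that $C_{G_r}(\gamma_0)$ is generated by the diagonal $C_G(c)$ together with a central element $a_{r,c}:=\gamma_0$ satisfying $a_{r,c}^r=c$, i.e. it is exactly the group $C_G(c)\langle a_{r,c}\rangle$. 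Moreover, by construction $\gamma_0$ acts trivially on its own fixed-point set, so under the identification $(X^r)^{\langle\gamma_0\rangle}\cong X^{\langle c\rangle}$ the subgroup $C_G(c)$ acts in the natural way and $a_{r,c}$ acts trivially --- precisely the pair $(X^{\langle c\rangle},C_G(c)\langle a_{r,c}\rangle)$ occurring in the definition of $\alpha_r$. This is the step I expect to be the main obstacle, since it is where the somewhat unusual group $C_G(g)\langle a_{r,g}\rangle$ and its trivial action are forced upon us and must be matched bracket-for-bracket with the definition of $\alpha_r$.

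For a general $\gamma$ of type $\{m_{r,[c]}\}$ I would then assemble these single-cycle computations. The fixed-point set splits as $\prod_{r,[c]}(X^{\langle c\rangle})^{m_{r,[c]}}$, and the centralizer splits as $\prod_{r,[c]}(C_G(c)\langle a_{r,c}\rangle)_{m_{r,[c]}}$ (writing $H_m=H^m\rtimes S_m$, as in the definition of $G_n$), where the extra symmetric group $S_{m_{r,[c]}}$ permutes the like cycles and hence the corresponding blocks of coordinates; that is, each factor is the wreath product acting on $(X^{\langle c\rangle})^{m_{r,[c]}}$ in the natural way. Consequently $[((X^n)^{\langle\gamma\rangle},C_{G_n}(\gamma))]=\prod_{r,[c]}[((X^{\langle c\rangle})^{m_{r,[c]}},(C_G(c)\langle a_{r,c}\rangle)_{m_{r,[c]}})]$, and each factor is exactly the coefficient of $(t^r)^{m_{r,[c]}}$ in the Kapranov zeta function $\zeta_{(X^{\langle c\rangle},C_G(c)\langle a_{r,c}\rangle)}(t^r)$.

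Finally I would collect the terms. Writing $t^n=\prod_{r,[c]}(t^r)^{m_{r,[c]}}$ and summing over all types, the distributive law turns the sum into the infinite product
$$
\alpha(\zeta_{(X,G)}(t))=\prod_{r\ge1}\prod_{[c]\in{\rm Conj\,}G}\zeta_{(X^{\langle c\rangle},C_G(c)\langle a_{r,c}\rangle)}(t^r).
$$
Using that the Kapranov zeta function defines a $\lambda$-structure on $K_0^{\rm fGr}(\Var)$ (hence is additive-to-multiplicative), the inner product over $[c]$ collapses to $\zeta$ of the sum $\sum_{[c]}[(X^{\langle c\rangle},C_G(c)\langle a_{r,c}\rangle)]=\alpha_r([(X,G)])$, giving $\prod_{r\ge1}\zeta_{\alpha_r([(X,G)])}(t^r)$, as claimed. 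The remaining items to verify are purely formal: that the type parametrization of ${\rm Conj\,}G_n$ is a bijection, that all the displayed isomorphisms of pairs respect relation~1) of Definition~\ref{def:Groth_with_actions} so that the identities hold in $K_0^{\rm fGr}(\Var)$, and that the Euler-product rearrangement is legitimate coefficientwise in $t$.
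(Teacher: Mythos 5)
Your proposal is correct and follows essentially the same route as the paper's proof: expand $\alpha(\zeta_{(X,G)}(t))$ coefficientwise, parametrize ${\rm Conj\,}G_n$ by types $\{m_r(c)\}$, identify the fixed-point sets and centralizers as products of $(X^{\langle c\rangle})^{m_r(c)}$ and $(C_G(c)\langle a_{r,c}\rangle)_{m_r(c)}$, rearrange into an Euler product, and collapse the inner product via additivity-to-multiplicativity of $\zeta$. The only difference is that you carry out the single-cycle fixed-point and centralizer computations explicitly, whereas the paper cites them from Tamanoi and its earlier work.
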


\begin{proof}
 The proof essentially follows from the description of the conjugacy classes of elements of the wreath products
 $G_n$ and of their centralizers from, e.~g., \cite{Tamanoi} and to a big extent repeats the computations in
 \cite{GPh}. An element of $G_n$ can be written as a pair $({\bf g},s)$, where ${\bf g}=(g_1,\ldots,g_n)\in G^n$,
 $s\in S_n$. One has
 \begin{eqnarray}
 &\ &\alpha\left(\zeta_{(X,G)}(t)\right)=\\
 &=&\sum_{n\ge 0}t^n\cdot\left(\sum_{[({\bf g},s)]\in {\rm Conj\,}G_n}
 \left[(X^n)^{\langle({\bf g},s)\rangle}, C_{G_n}\left( ({\bf g},s)\right)\right]\right)\,,
 \end{eqnarray}
 where the sum is over the conjugacy classes $[({\bf g},s)]$ of elements of $G_n$. The conjugacy classes
 $[({\bf g},s)]$ of elements $({\bf g},s)=(g_1,\ldots,g_n;s)$ of $G_n$ are characterized by their types.
 For a cycle $z=(i_1,\ldots,i_r)$ (of length $r$) in the permutation $s$, its {\em cicle-product}
 $g_{i_r}g_{i_{r-1}}\ldots g_{i_1}$ is well-defined up to conjugacy. For $[c]\in{\rm Conj\,}G$ and for $r\ge 1$,
 let $m_r(c)$ be the number of $r$-cycles in $s$ with the cicle-product from $[c]$. One has
 $$
 \sum_{[c]\in{\rm Conj\,}G,\ r\ge1}rm_r(c)=n\,.
 $$
 The collection $\{m_r(c)\}_{r,[c]}$ is called the type of the element $({\bf g},s)\in G_n$. Two elements
 of $G_n$ are conjugate if and only if they are of the same type. Therefore the summation over the conjugacy
 classes of elements of $G_n$ can be substituted by the summation over all possible types.

The fixed point set $(X^n)^{\langle ({\bf g},s)\rangle }$ can be identified with 
\begin{equation*}
\prod_{[c]\in {\rm Cong\,}G,r\ge 1}(X^{\langle c\rangle})^{m_r(c)}\,.
\end{equation*}
The centralizer of $({\bf g},s)\in G_n$ is isomorphic to
\begin{equation*}
\prod_{[c]\in {\rm Cong\,}G,r\ge 1}\left(C_G(c)\langle a_{r,c}\rangle\right)_{m_r(c)}\,,
\end{equation*}
where the definition of the group $C_G(c)\langle a_{r,c}\rangle$ and the description of its action on the fixed
point set $X^{\langle c\rangle}$ are given above.

Therefore one has
\begin{eqnarray*}
{}&\ & \alpha\left(\zeta_{(X,G)}(t)\right)=\sum_{n=0}^{\infty}t^n\cdot
\left(
\sum_{[({\bf g},s)]\in {\rm Cong\,}G_n}
\left[(X^{\langle ({\bf g},s)\rangle}, C_{G_n}\left(({\bf g},s)\right) \right]\right)\\
{}&=&\sum_{n=0}^{\infty}t^n\cdot
\left(\sum_{\{m_r(c)\}}\left[\prod_{[c], r}
(X^{\langle c\rangle })^{m_r(c)},\prod_{[c],r}(C_G(c)\langle a_{r,c}\rangle)_{m_r(c)}\right]\right)\\
&{\ =}&\sum_{\{m_r(c)\}}t^{\sum rm_r(c)}\cdot\prod_{[c],r} 
\left[\left(X^{\langle c\rangle}\right)^{m_r(c)}, (C_G(c)\langle a_{r,c}\rangle)_{m_r(c)}\right]\\
&{\ =}&\prod_{r=1}^{\infty}\prod_{[c]}\sum_{m_r(c)=1}^{\infty}\left(t^{rm_r(c)}\left[(X^{\langle c\rangle})^{m_r(c)},
(C_G(c)\langle a_{r,c}\rangle)_{m_r(c)}\right]
\right)\\
&=& \prod_{r=1}^{\infty}\prod_{[c]}\zeta_{(X^{\langle c\rangle},C_G(c)\langle a_{r,c}\rangle)}(t^r)=
\prod_{r=1}^{\infty}\zeta_{\sum_{[c]}[(X^{\langle c\rangle},C_G(c)\langle a_{r,c}\rangle)]}(t^r)\\
&=& \prod_{r=1}^{\infty}\zeta_{\alpha_r([(X, G)])}(t^r).
\end{eqnarray*}
\end{proof}

The restrictions of the homomorphisms $\alpha$ and $\alpha_r$ to the subring $\RR\subset K_0^{\rm fGr}(\Var)$ 
define the homomorphisms  $\alpha$ and $\alpha_r$ from $\RR$ to $\RR.$ The homomorphism $\alpha_r$ acts by the
formula $\alpha_r(T^{[G]})=\sum_{[g]\in {\rm Conj\, G}} T^{[C_G(g)\langle a_{r,g}\rangle]}$.

\begin{corollary} For $a\in \RR$ one has
\begin{equation}
 \alpha\left(\zeta_{a}(t)\right)=\prod_{r=1}^{\infty} \zeta_{\alpha_r\left(a\right)}(t^r)\,.
\end{equation}
\end{corollary}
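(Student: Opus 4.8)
The plan is to deduce the corollary from the Theorem by exploiting the additive structure of $\RR$, using that the two sides of the desired identity both convert sums in $\RR$ into products of power series. First I would observe that each free generator $T^{\mathfrak G}$ of $\RR$ is exactly the class $[(\mathrm{pt},G)]$ of the one-point set with the (unique) action of a representative $G$ of $\mathfrak G$. Hence the Theorem, applied to $(X,G)=(\mathrm{pt},G)$, already gives the identity
$$
\alpha\left(\zeta_{T^{\mathfrak G}}(t)\right)=\prod_{r=1}^{\infty}\zeta_{\alpha_r(T^{\mathfrak G})}(t^r)
$$
for every generator, the right-hand side matching the explicit formula $\alpha_r(T^{[G]})=\sum_{[g]}T^{[C_G(g)\langle a_{r,g}\rangle]}$ since $\mathrm{pt}^{\langle g\rangle}=\mathrm{pt}$. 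Note also that $\zeta_{(\mathrm{pt},G)}(t)=1+\sum_{n\ge 1}T^{[G_n]}t^n$ lies in $1+t\cdot\RR[[t]]$, so both sides make sense inside $\RR$, and $\RR$ is closed under the relevant operations by the $\lambda$-structure of \cite{FAA2018}.

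Next I would check that both sides, viewed as maps $\RR\to 1+t\cdot\RR[[t]]$, are homomorphisms from the additive group $(\RR,+)$ to the multiplicative group $(1+t\cdot\RR[[t]],\cdot)$, and then invoke the fact that they agree on a generating set. For the left-hand side, the Kapranov zeta function is a $\lambda$-structure, so $\zeta_{a+b}(t)=\zeta_a(t)\cdot\zeta_b(t)$; since $\alpha$ restricts to a ring homomorphism $\RR\to\RR$, the induced map $\alpha_*$ on power series (applying $\alpha$ to the coefficients) is multiplicative, whence $\alpha\left(\zeta_{a+b}(t)\right)=\alpha\left(\zeta_a(t)\right)\cdot\alpha\left(\zeta_b(t)\right)$. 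For the right-hand side, each $\alpha_r$ is additive, so $\alpha_r(a+b)=\alpha_r(a)+\alpha_r(b)$, and the $\lambda$-property again gives $\zeta_{\alpha_r(a+b)}(t^r)=\zeta_{\alpha_r(a)}(t^r)\cdot\zeta_{\alpha_r(b)}(t^r)$; taking the product over $r$ then factors accordingly. The same computation applied to the group-inverse in $1+t\cdot\RR[[t]]$ shows that both assignments send $-a$ to the inverse of the series they assign to $a$, so the identity is also preserved under additive inverses.

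Finally, since $\RR$ is by construction the free abelian group on the generators $T^{\mathfrak G}$, every $a\in\RR$ is a finite $\Z$-linear combination of these generators. The identity holds on each generator by the Theorem, and by the previous step the collection of $a$ for which it holds is closed under addition and negation; hence it holds for all $a\in\RR$. I do not expect a genuine obstacle here, as the Theorem does all the geometric work. The single point that must be verified with care is that the two assignments $a\mapsto\alpha\left(\zeta_a(t)\right)$ and $a\mapsto\prod_{r=1}^{\infty}\zeta_{\alpha_r(a)}(t^r)$ are indeed homomorphisms $(\RR,+)\to(1+t\cdot\RR[[t]],\cdot)$ of the stated kind; once this is in place, agreement on the generating set supplied by the Theorem forces agreement everywhere.
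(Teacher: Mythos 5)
Your proposal is correct and matches the paper's (implicit) argument: the paper states the corollary without a separate proof, as the restriction of the Theorem to the subring $\RR$, and your reduction to the generators $T^{\mathfrak G}=[(\mathrm{pt},G)]$ followed by the additive-to-multiplicative extension over the free abelian group $\RR$ is exactly the standard way to make that restriction precise. No gaps.
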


\end{document}